\newtheorem{thm}{Theorem}[section]
\newtheorem{cor}[thm]{Corollary}
\newtheorem{lemma}[thm]{Lemma}
\newtheorem{ex}[thm]{Example}
\newtheorem{prop}[thm]{Proposition}
\theoremstyle{definition}
\newtheorem{remark}[thm]{Remark}
\theoremstyle{question}
\theoremstyle{Conjecture}
\newtheorem{con}[thm]{Conjecture}
\numberwithin{equation}{section}
\begin{document}

\title[On number of isomorphism classes of derived subgroups]{On number of isomorphism classes of derived subgroups}%
\author{L. Jafari Taghvasani and S. Marzang}%

\address{Department of Mathematics, University of Kurdistan, P.O. Box: 416, Sanandaj, Iran}%
 \email{L.jafari@sci.uok.ac.ir and smarzang@gmail.com}
\begin{abstract}

  In this paper we show that a finite nonabelian characteristically simple group $G$ satisfying $n=|\pi(G)|+2$ if and only if $G\cong A_5$, where $n$ is the number of isomorphism classes of derived subgroups of $G$ and $\pi(G)$ is the set of prime divisors of the group $G$. Also, we give a negative answer to a question raised in \cite{zar}.\\\\
  {\bf Keywords}.
 derived subgroup; simple group.\\
{\bf Mathematics Subject Classification (2000)}. 20F24, 20E14.
\end{abstract}
\maketitle

\section{\textbf{ Introduction and results}}

Following \cite{rob3}, we say that a group $G$ has the property $\mathcal{GR}_{n}$ if it has a finite number $n$ of derived subgroups. In 2005, de Giovanni and Robinson \cite{rob3} and, independently, Herzog, Longobardi, Maj in
 \cite{hlm} studied new finiteness conditions related to the derived subgroups of a group. They proved that every locally graded $\mathcal{GR}_{n}$-group is finite-by-abelian
(or $G'$ is finite). More recently the author in \cite{zar}, has been improved this result, by proving that every locally graded $\mathcal{GR}_{n}$-group is nilpotent-by-abelian-by-(finite of order $\leq k!$)-by-abelian.

Subsequently, the authors in \cite{rob1, rob2}, investigated the class of groups which have at most $n$ isomorphism classes of derived subgroups (denoted by $\mathfrak{D}_n$) with $n\in\{2,3\}$. Clearly a group is $\mathfrak{D}_1$-group if and only if it is abelian.  Also the authors,  in \cite{rob2}, classified completely the locally finite $\mathfrak{D_3}$-groups. It seems interesting to study groups $\mathcal{GR}_{n}$-groups with a given value for $n$. In this paper, among other things, we first show that for every nonabelian characteristically simple $\mathfrak{D}_n$-group $G$, $n\geq |\pi(G)|+2$. Moreover, we show that this inequality is proper unless for the alternating group $A_5$. In fact, we have the following  new characterization of $A_5$ as follows:

\begin{thm}\label{tt}
For every nonabelian characteristically simple $\mathfrak{D}_n$-group $G$ we have $n=|\pi(G)|+2$ if and only if $G\cong A_5$.
\end{thm}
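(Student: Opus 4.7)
The plan is to verify sufficiency by direct inspection of $A_5$, to reduce necessity to the case where $G$ is a nonabelian finite simple group, and then to invoke the classification of finite simple groups to isolate $A_5$ as the unique possibility.

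For sufficiency, up to conjugacy the maximal subgroups of $A_5$ are $A_4$, $S_3$, and $D_{10}$, whose derived subgroups are $V_4$, $\mathbb{Z}_3$, and $\mathbb{Z}_5$ respectively; every other proper subgroup of $A_5$ is abelian. Hence the isomorphism classes of derived subgroups of $A_5$ are exactly $\{1,\mathbb{Z}_3,\mathbb{Z}_5,V_4,A_5\}$, so $n=5=|\pi(A_5)|+2$.

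For necessity, first reduce to the simple case. Write the characteristically simple $G$ as $G\cong S^k$ with $S$ nonabelian simple, so $\pi(G)=\pi(S)$. The subgroups formed by the first $i$ direct factors, namely $S,S^2,\ldots,S^k$, are each perfect (a direct product of perfect groups is perfect), hence equal to their own derived subgroups, and they have pairwise distinct orders; moreover, all but the first have order strictly larger than $|S|$, so their isomorphism classes are disjoint from those of derived subgroups arising inside a single factor. Writing $n(S)$ for the analogous count in $S$ and applying the lower bound $n(S)\geq|\pi(S)|+2$ stated immediately before Theorem~\ref{tt}, we obtain
\[
n \;\geq\; n(S)+(k-1) \;\geq\; |\pi(G)|+k+1,
\]
so the equality $n=|\pi(G)|+2$ forces $k=1$ and $G$ is simple.

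Now assume $G$ is nonabelian finite simple with $n=|\pi(G)|+2$. The lower bound was produced by exhibiting the trivial subgroup, $G$ itself (perfect), and for each prime $p\in\pi(G)$ a single nontrivial $p$-subgroup of the form $H'$ for some $H\leq G$. Tight equality forces that, up to isomorphism, every derived subgroup of every subgroup of $G$ lies in this short list; in particular $G$ can contain no subgroup $H$ whose derived subgroup is nonabelian and proper in $G$, nor a second non-isomorphic $p$-subgroup realised as an $H'$ for any prime $p$. The plan is to invoke CFSG: for $|\pi(G)|=3$ the candidates are the eight $K_3$-simple groups $A_5,A_6,\mathrm{PSL}_2(7),\mathrm{PSL}_2(8),\mathrm{PSL}_2(17),\mathrm{PSL}_3(3),\mathrm{PSU}_3(3),\mathrm{PSU}_4(2)$, and for each $G\neq A_5$ on this list a direct inspection of the maximal subgroups exhibits a derived subgroup outside the forced list (typically $A_4=S_4'$, which is nonabelian and of non-prime-power order, or a perfect proper section such as $A_5\leq A_6$); for $|\pi(G)|\geq 4$ one argues uniformly that the subgroup structure of $G$ always yields an analogous surplus, again using CFSG-based information on subgroup lattices.

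The main obstacle is this final uniform step. The $K_3$-case is a finite and essentially routine lattice check, but the $|\pi(G)|\geq 4$ range is the substantive part: one needs a clean structural lemma guaranteeing that every nonabelian finite simple group with at least four prime divisors contains a subgroup $H$ for which $H'$ is either a second non-isomorphic $p$-subgroup for some $p$ or a nonabelian section not isomorphic to $G$ itself. A natural route is to show, via CFSG, that such a $G$ always possesses an $S_4$-section (giving $A_4$ as a derived subgroup) or a proper nonabelian simple section, so that the surplus derived subgroup is forced.
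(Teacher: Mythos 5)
Your verification of sufficiency for $A_5$ and your reduction from the characteristically simple case to the simple case are both correct; in fact your reduction (adding $k-1$ new isomorphism classes via the chain of perfect subgroups $S\le S^2\le\cdots\le S^k$) is cleaner than the paper's, which multiplies counts of direct factors. The problem is the heart of the theorem: the claim that a nonabelian simple group $G\not\cong A_5$ always has $n>|\pi(G)|+2$. Your proposal does not prove this; it states a plan ("invoke CFSG\dots one argues uniformly\dots a natural route is to show\dots") and explicitly flags the $|\pi(G)|\ge 4$ range as an unresolved obstacle. That is precisely the substantive content of the theorem, so as it stands the necessity direction is unproven. There is also a small logical slip in your tightness analysis: equality $n=|\pi(G)|+2$ only forces every derived subgroup to be trivial, equal to $G$, or a $p$-group with a single isomorphism class per prime; it does not exclude a subgroup whose derived subgroup is a nonabelian $p$-group, so "nonabelian and proper" is not the right dichotomy — "not a prime power, or a second non-isomorphic $p$-group for some $p$" is.

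The paper fills exactly this gap with a different and much more economical device than a case run through CFSG. It uses the fact (Barry--Ward, cited as \cite{bw}) that every nonabelian finite simple group contains a minimal simple subgroup. If $G$ is not itself minimal simple, a proper minimal simple subgroup $H$ is perfect, so $H'=H$ is a derived subgroup that is neither trivial, nor $G$, nor a $p$-group, and the surplus is immediate. This confines the whole problem to Thompson's list of minimal simple groups: $PSL(2,2^p)$, $PSL(2,3^p)$, $PSL(2,p)$ with $5\mid p^2+1$, $Sz(2^p)$, $PSL(3,3)$. For $PSL(2,q)$ with $|\pi(G)|\ge 5$ the paper uses Dickson's subgroup classification to produce a dihedral subgroup $D_{2z}$ with $z\mid (q\pm 1)/d$ divisible by two odd primes, whose derived subgroup is cyclic of non-prime-power order; for $Sz(q)$ it exhibits two Frobenius subgroups $FH$ and $Z(F)H$ inside the normalizer of a Sylow $2$-subgroup whose derived subgroups are the non-isomorphic $2$-groups $F$ (order $q^2$, nonabelian) and $Z(F)$ (order $q$); the remaining groups with three or four prime divisors are checked by machine. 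If you want to complete your proof, you need either this reduction to minimal simple groups (which replaces your missing "uniform step" by a two-line argument plus a short explicit list) or some genuine substitute for it; the CFSG-wide structural lemma you sketch is not something you can simply assert.
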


Finally, we give a negative answer to a question raised by the author in \cite{zar}, as follows: Let $G$ be a group and $H$ a finite simple group.
Is it true that $$G\cong H \Leftrightarrow G, H \in \mathcal{GR}_{n}\setminus \mathcal{GR}_{n-1}, \text{for some~~} n?$$
$(\text {Or},~~G\cong H \Leftrightarrow G, H \in \mathfrak{D}_n\setminus \mathfrak{D}_{n-1}, \text{for some~~} n?)$

In this paper all groups will be finite and we use
the usual notation, for example $A_n, S_n, PSL(n, q)$, $PSU(n, q)$ and  $Sz(q)$, respectively,
denote the alternating group on $n$ letters, the symmetric group on $n$ letters,
the projective special linear group of degree $n$ over the finite field of size $q$, the projective special unitary group of degree $n$ over the
finite field of order $q^2$ and
the Suzuki group over the field with $q$ elements.

\section{Proofs}

Here, we first show that for every nonabelian characteristically simple $\mathfrak{D}_n$-group $G$, $n\geq |\pi(G)|+2$. For this, we need the following lemmas.

\begin{lemma}$\mathbf{(Burnside)}$ \label{1}
Let $P$ be a $p$-sylow subgroup of a finite group $G$. If $N_G(P)=C_G(P)$ then $G$ is a $p$-nilpotent group.
\end{lemma}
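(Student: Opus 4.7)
The plan is to prove Burnside's normal $p$-complement theorem via the transfer homomorphism. First I would observe that the hypothesis forces $P$ to be abelian: since $P \subseteq N_G(P) = C_G(P)$, every element of $P$ commutes with every element of $P$. With $P$ abelian we have $P' = 1$, so the transfer target $P/P'$ is just $P$ itself, and one obtains a genuine homomorphism $V \colon G \to P$.

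Next I would apply Burnside's fusion lemma: if two elements of $Z(P)$ are $G$-conjugate, then they are already conjugate by some element of $N_G(P)$. Under our hypothesis $Z(P) = P$ and $N_G(P) = C_G(P)$, so any such conjugation acts trivially on $P$; hence two elements of $P$ that are conjugate in $G$ must actually be equal. This \emph{trivial fusion} in $P$ is the crucial input for the transfer computation.

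With trivial fusion in hand, the standard expression for the transfer restricted to $P$ collapses: for each $x \in P$ one finds $V(x) = x^{[G:P]}$. Since $\gcd([G:P], |P|) = 1$, raising to the $[G:P]$-th power is an automorphism of the abelian group $P$, so $V|_P$ is an isomorphism and in particular $V \colon G \to P$ is surjective. Setting $K = \ker V$ gives $|K| = [G:P]$, so $K$ is a normal $p$-complement and $G = P \ltimes K$ is $p$-nilpotent, as desired.

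The main obstacle is the transfer computation itself: one must choose coset representatives of $P$ in $G$, analyze the orbit structure of $\langle x \rangle$ on $G/P$, and verify that the resulting product $\prod_i t_i^{-1} x^{n_i} t_i$ collapses to $x^{[G:P]}$. The trivial-fusion step is exactly what forces each factor $t_i^{-1} x^{n_i} t_i \in P$ to equal $x^{n_i}$, so once the machinery of the transfer is set up the remainder is routine. Since the statement is classical, in the paper itself it would be appropriate to merely cite a standard reference (e.g., Isaacs or Robinson) for Burnside's theorem rather than reproduce the argument.
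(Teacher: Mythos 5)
Your proposal is a correct and complete outline of the standard transfer proof of Burnside's normal $p$-complement theorem: the hypothesis $N_G(P)=C_G(P)$ forces $P$ abelian with trivial fusion, the transfer evaluation then gives $V(x)=x^{[G:P]}$ on $P$, and coprimality makes $V|_P$ an isomorphism, yielding the normal $p$-complement $\ker V$. The paper itself offers no proof of this lemma at all, simply quoting it as Burnside's classical theorem, which is exactly the treatment you anticipated in your closing remark.
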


\begin{lemma}\label{pi}
Let $G$ be finite group which for every $p_i\in \pi(G)$, is not $p_i$-nilpotent. Then there is a subgroup $H_i$ of $G$ which $H_i'$ is a non-trivial $p_i$-group, for every $p_i\in \pi(G)$. In particular if $G$ is a $\mathfrak{D}_n$-group, then $n\geq |\pi(G)|+1$.
\end{lemma}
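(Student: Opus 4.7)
The plan is to use Lemma \ref{1} to build, for each prime $p = p_i \in \pi(G)$, a subgroup $H_i \leq G$ whose derived subgroup is a non-trivial $p$-group. Then, since these derived subgroups are $p_i$-groups for pairwise distinct primes $p_i$, they are pairwise non-isomorphic, and together with the trivial derived subgroup (say, of the identity subgroup) they furnish at least $|\pi(G)|+1$ isomorphism classes, giving the bound on $n$.

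Fix $p = p_i$ and a Sylow $p$-subgroup $P$ of $G$. Since $G$ is not $p$-nilpotent, Lemma \ref{1} supplies an element $g \in N_G(P) \setminus C_G(P)$. The key technical move is to split $g = ab$ into its commuting $p$-part $a$ and $p'$-part $b$; since $g$ does not centralize $P$, at least one of $a,b$ does not centralize $P$.

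If the $p'$-part $b$ fails to centralize $P$, I would take $H_i = P\langle b\rangle$. Because $P \trianglelefteq H_i$ and $\langle b\rangle \cap P = 1$ (orders are coprime), the quotient $H_i/P$ is cyclic, so $H_i' \leq P$; on the other hand $[P,b] \leq H_i'$ is non-trivial by the choice of $b$, so $H_i'$ is a non-trivial $p$-subgroup. Otherwise the $p$-part $a$ moves $P$; as a $p$-element of $N_G(P)$ it lies in the (normal, hence unique) Sylow $p$-subgroup $P$ itself, so $P$ is non-abelian, and setting $H_i = P$ makes $H_i' = P'$ a non-trivial $p$-group.

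The step I expect to need most care is the last one: ensuring $H_i'$ is \emph{entirely} a $p$-group rather than merely containing a non-trivial $p$-element. This is precisely what forces the $p$/$p'$ decomposition of $g$ — working directly with $g$ would, in general, produce commutators of mixed order, so one must isolate a generator whose action on $P$ is either a pure $p$-inner automorphism (landing one inside $P$) or a pure $p'$-automorphism (making the enveloping group a cyclic extension of $P$). Once this is handled, the counting of isomorphism classes for the $\mathfrak{D}_n$ conclusion is immediate.
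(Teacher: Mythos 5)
Your argument is correct and is essentially the paper's proof: apply Burnside (Lemma \ref{1}) to get $x_i\in N_G(P_i)\setminus C_G(P_i)$, form a subgroup containing $P_i$ as a normal subgroup with cyclic quotient so that its derived subgroup is a non-trivial $p_i$-group, and then count the pairwise non-isomorphic $p_i$-groups plus the trivial one. The only difference is that your $p$-part/$p'$-part case split is superfluous: the paper simply takes $H_i=\langle x_i,P_i\rangle$ for the chosen $x_i$ itself, since $P_i\trianglelefteq H_i$ and $H_i/P_i$ cyclic already force $H_i'\leq P_i$ (indeed $H_i'=P_i'[P_i,x_i]$), so the ``mixed order'' worry never arises.
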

\begin{proof}
Let $p_i\in \pi(G)$, and $P_i\in Syl_{p_i}(G)$,  if $N_G(P_i)=C_G(P_i)$, then by Lemma \ref{1}, $G$ is a $p_i$-nilpotent, a contradiction. So $C_G(P_i)<N_G(P_i)$ for every $p_i\in \pi(G)$. Choose $x_i \in N_G(P_i)\setminus C_G(P_i)$, and let $H_i=\langle x_i, P_i\rangle$ then $H_i'=P_i'[P_i,x]$, and each $H_i'$ is a non-trivial $p_i$-subgroup.
\end{proof}
\begin{lemma}\label{lf}
  If $G$ is a finite nonabelian  simple $\mathfrak{D}_n$-group, then $n\geq |\pi(G)|+2$.
\end{lemma}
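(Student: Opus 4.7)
The plan is to leverage Lemma \ref{pi} and add one additional isomorphism class coming from $G$ itself. Since $G$ is nonabelian simple, it has no proper nontrivial normal subgroup; in particular, it has no normal $p$-complement for any prime $p \in \pi(G)$, so $G$ is not $p$-nilpotent for any $p \in \pi(G)$. Hence the hypothesis of Lemma \ref{pi} is satisfied, and there exist subgroups $H_i \leq G$ such that $H_i'$ is a nontrivial $p_i$-group, one for each $p_i \in \pi(G)$.

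Together with the trivial derived subgroup (e.g., $\{1\}' = \{1\}$), the $H_i'$ yield $|\pi(G)| + 1$ pairwise non-isomorphic derived subgroups: the $H_i'$ are non-isomorphic to each other since they are $p$-groups for distinct primes, and none is trivial. This is exactly what Lemma \ref{pi} records.

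To produce one more class, I would use the fact that a nonabelian simple group is perfect, so $G' = G$ is itself a derived subgroup. It remains to check that $G$ is not isomorphic to any of the previously listed derived subgroups. Clearly $G \not\cong \{1\}$ since $G$ is nonabelian. Moreover, by Burnside's $p^aq^b$ theorem, any nonabelian simple group satisfies $|\pi(G)| \geq 3$, and in particular $G$ is not a $p$-group; hence $G \not\cong H_i'$ for any $i$. Thus $G$ contributes a genuinely new isomorphism class, and we obtain $n \geq |\pi(G)| + 2$.

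The only potentially delicate point is ruling out $G \cong H_i'$, and this is handled cleanly by Burnside's theorem (even the weaker observation that $|\pi(G)| \geq 2$ would suffice here). Everything else is an application of Lemma \ref{pi} together with the basic structural facts that nonabelian simple groups are perfect and admit no normal $p$-complement.
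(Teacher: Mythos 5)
Your proposal is correct and follows essentially the same route as the paper: the paper's (very terse) proof likewise observes that a nonabelian simple group is not $p$-nilpotent for any $p\in\pi(G)$ and is perfect, so Lemma \ref{pi} supplies $|\pi(G)|+1$ classes and $G'=G$ supplies one more. You merely spell out the details the paper leaves implicit (no normal $p$-complement, and $G$ not a $p$-group via Burnside), which is fine.
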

\begin{proof}
Since $G$ is not $p$-nilpotent for every $p\in \pi(G)$ and $G'=G$, by Lemma \ref{pi}, the assertion is obvious.
\end{proof}
\begin{lemma}\label{*}
Let $H$ be a $\mathfrak{D}_{n_1}$-group, $K$ be a $\mathfrak{D}_{n_2}$-group and $G=H\times K$.
Then we have the following statement:\\
1). $G$ is $D_{t}$-group, for some $t\geq n_1n_2$.\\
2). If $H\cong K$ and $K$ is a simple group, then $G$ is $D_{t}$-group, for some $t\geq n_1n_2+1$.\\
3). If  $(|H|,|K|)=1$, then $G$ is $D_{n_1n_2}$-group.
\end{lemma}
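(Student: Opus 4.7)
The plan is to handle part (3) first, since it is the clean base case and suggests the right tools for the other two parts. By Goursat's lemma, any subgroup $L\le H\times K$ corresponds to a quintuple $(A,B,C,D,\phi)$ with $C\triangleleft A\le H$, $D\triangleleft B\le K$, and an isomorphism $\phi:A/C\xrightarrow{\sim}B/D$. The common order of $A/C$ and $B/D$ divides both $|H|$ and $|K|$, so the hypothesis $(|H|,|K|)=1$ forces it to be $1$; hence $L=A\times B$ and $L'=A'\times B'$. Moreover $|A'|$ divides $|H|$ and $|B'|$ divides $|K|$ are coprime, so the Hall $\pi(H)$- and $\pi(K)$-components of $A'\times B'$ are canonical, giving $A_1'\times B_1'\cong A_2'\times B_2'$ if and only if $A_1'\cong A_2'$ and $B_1'\cong B_2'$. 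The resulting product map $\mathcal{D}(H)\times \mathcal{D}(K)\to \mathcal{D}(G)$ is therefore a bijection, and $G\in \mathfrak{D}_{n_1 n_2}$ exactly.

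For part (1) I would use the same product construction without coprimality: choose $A_1,\ldots,A_{n_1}\le H$ and $B_1,\ldots,B_{n_2}\le K$ so that $\{[A_i']\}$ and $\{[B_j']\}$ enumerate the iso classes of derived subgroups of $H$ and $K$, and record $(A_i\times B_j)'=A_i'\times B_j'$. These furnish $n_1 n_2$ candidate iso classes of derived subgroups of $G$; the remaining task is to verify that they are pairwise non-isomorphic as abstract groups, using the Krull--Schmidt theorem (unique decomposition of a finite group into indecomposable direct factors) together with the rigidity of derived subgroups. For part (2), building on (1), the additional iso class comes from a diagonal-type subgroup: with $H\cong K$ nonabelian simple, $H=H'$ forces the diagonal $\Delta=\{(h,\sigma(h)):h\in H\}$ (for any isomorphism $\sigma:H\to K$) to satisfy $\Delta=\Delta'$, and one then checks that this iso class is genuinely new beyond those produced in (1).

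The principal obstacle is part (1): Krull--Schmidt pins down the multiset of indecomposable factors of $A_i'\times B_j'$ but does not, on its own, record which factors came from $H$ and which from $K$ when the two sides share common indecomposable factors (for instance $C_3\times 1\cong 1\times C_3$ as abstract groups). One must therefore use additional structural information about the derived subgroups of $H$ and $K$; this is plausible in the settings where the lemma is actually applied, namely direct products of simple or characteristically simple groups, where the derived-subgroup lattice is quite restricted. Part (3) sidesteps the issue entirely via coprimality, which is why there the count is exact rather than only a lower bound.
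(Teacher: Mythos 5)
Your part (3) is correct and is in substance the paper's own argument: coprimality forces every subgroup of $G$ to split as $T_1\times T_2$ (you obtain this from Goursat's lemma; the paper simply asserts it), hence the derived subgroups of subgroups of $G$ are exactly the groups $T_1'\times T_2'$, and your further observation that coprimality lets one recover the pair of classes $([T_1'],[T_2'])$ from the abstract group $T_1'\times T_2'$ is the point the paper leaves implicit but which is needed to make the count exactly $n_1n_2$. Your plan for (2) is also the paper's plan: the diagonal subgroup $\Delta$ is perfect because $K=K'$ (the paper invokes the Ore conjecture here, which is more than necessary).

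The genuine gap is in parts (1) and (2), and it is exactly the obstacle you yourself flag; moreover it cannot be repaired, because under the exact-count reading that the paper uses when it applies the lemma (in Theorem \ref{t2} and in the proof of Theorem \ref{tt}) the claimed inequalities fail in the very case they are invoked for, namely $H\cong K$ nonabelian simple. Take $H=K=A_5$, so $n_1=n_2=5$ with derived-subgroup classes $1,C_3,C_5,V_4,A_5$. For any $L\le A_5\times A_5$ the projections of $L'$ are $\pi_1(L)'$ and $\pi_2(L)'$, so $L'$ is a subdirect product inside $X\times Y$ with $X,Y\in\{1,C_3,C_5,V_4,A_5\}$; checking the possible common quotients, the isomorphism type of $L'$ must be one of the $15$ unordered products $X\times Y$ or possibly $C_2^3$, so $A_5\times A_5$ has at most $16<25=n_1n_2$ classes. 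This is precisely your $C_3\times 1\cong 1\times C_3$ collapse: ordered pairs of classes overcount abstract isomorphism types, and no Krull--Schmidt bookkeeping can restore $n_1n_2$. The same example defeats the intended ``$+1$'' in (2): the diagonal satisfies $\Delta'=\Delta\cong K\cong(K\times 1)'$, so the verification you correctly leave as the remaining task (``this class is genuinely new'') in fact fails. (The paper is in the same position: its proof of (1) is the single word ``Clearly'', and its proof of (2) is the same diagonal.) What does survive is the weaker statement that every product $A'\times B'$ is a derived subgroup of $G$, so $t$ is at least the number of pairwise non-isomorphic such products, which for $H\cong K$ is at most $m(m+1)/2$ rather than $m^2$, and even that many requires a Krull--Schmidt-type argument of the kind you sketch; any later use of the lemma (e.g.\ Theorem \ref{t2}, whose conclusion also follows by applying the argument of Lemma \ref{lf} directly to $G$, which is perfect and not $p$-nilpotent for any prime $p$) should be rerouted through such a corrected count.
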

\begin{proof}
1). Clearly. \\
2). For proof, we consider the diagonal subgroup of $G$ which is of the form $T=\{(a,a)| a \in K\}$. Now as every commutator element of $T$ is of the form $([a,b], [a,b])$, where $a, b \in K$, one can conclude, by \cite{lo}, that $T$ is a perfect subgroup of $G$, that is $T'=T$.  Hence the result follows from Lemma \ref{lf} and Lemma \ref{pi}.\\
3). Since $(|H|,|K|)=1$, every subgroup $T$ of $G$ is of the form $T=T_1\times T_2$ and so  $T'=T_1' \times T_2'$, where $T_1$ and $T_2$ are subgroups of $H$ and $K$ respectively. This complete the proof.
\end{proof}

\begin{thm}\label{t2}
  If $G$ is a finite nonabelian  characteristically simple $\mathfrak{D}_n$-group, then $n\geq |\pi(G)|+2$.
\end{thm}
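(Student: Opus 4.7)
The plan is to reduce $G$ to its standard form as a direct power of a nonabelian simple group and then replay the argument of Lemma \ref{lf}, verifying that the two facts it rests upon, namely perfectness and failure of $p$-nilpotency at every prime divisor, both survive the passage from simple to characteristically simple. By the structure theorem for finite characteristically simple groups, I may write $G\cong S^k$ with $S$ simple and $k\geq 1$; since $G$ is nonabelian, $S$ cannot be of prime order, hence $S$ is nonabelian simple and $\pi(G)=\pi(S)$. The case $k=1$ is just Lemma \ref{lf}, so the interesting case is $k\geq 2$.

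For $k\geq 2$, I first check that $G$ is perfect: $G'=(S^k)'=(S')^k=S^k=G$. Next I check that $G$ is not $p$-nilpotent for any $p\in\pi(G)$. One clean way is to observe that $p$-nilpotency passes to quotients and that $S$ appears as a quotient of $G$ via projection onto any factor; since $S$ is nonabelian simple, it cannot be $p$-nilpotent for any $p\in\pi(S)$ (a proper normal $p$-complement would contradict simplicity, as $|S|$ is not a prime power by Burnside), and therefore $G$ is not $p$-nilpotent either. Alternatively one can compute directly with Sylow subgroups, using that a Sylow $p$-subgroup of $S^k$ is $P_0^k$ with $P_0\in Syl_p(S)$ and that $N_{S^k}(P_0^k)/C_{S^k}(P_0^k)\cong (N_S(P_0)/C_S(P_0))^k\neq 1$, whence Lemma \ref{1} applies.

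Once these two inputs are in hand, Lemma \ref{pi} produces, for each $p_i\in\pi(G)$, a subgroup $H_i\leq G$ with $H_i'$ a nontrivial $p_i$-group; these give $|\pi(G)|$ pairwise non-isomorphic derived subgroups. Adjoining the trivial subgroup $\{1\}$ (realized as the derived subgroup of any cyclic subgroup of prime order in $G$) and $G=G'$ itself, which is nonabelian and hence not a $p$-group, supplies two further isomorphism classes, yielding a total of at least $|\pi(G)|+2$ isomorphism classes of derived subgroups. The only real obstacle is confirming that $G$ fails to be $p$-nilpotent at every $p\in\pi(G)$; everything else is a repeat of the proof of Lemma \ref{lf} together with the bookkeeping observation that perfectness contributes an extra class beyond the $|\pi(G)|+1$ delivered by Lemma \ref{pi}.
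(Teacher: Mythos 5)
Your proof is correct, but it follows a genuinely different route from the paper. The paper's proof decomposes $G\cong K^t$ with $K$ simple and then combines the direct-product counting statement of Lemma \ref{*} with Lemma \ref{lf} applied to the factor $K$, obtaining the chain $n\geq m^t\geq (|\pi(K)|+2)^t\geq |\pi(G)|+2$. You instead work with $G=S^k$ as a whole: you verify that $G$ is perfect and that $G$ fails to be $p$-nilpotent for every $p\in\pi(G)$ (either by projecting onto a simple factor and noting that $p$-nilpotency passes to quotients, or by the componentwise Sylow computation $N_{S^k}(P_0^k)/C_{S^k}(P_0^k)\cong (N_S(P_0)/C_S(P_0))^k\neq 1$ and Lemma \ref{1}), and then you run the argument of Lemma \ref{pi} and Lemma \ref{lf} directly on $G$, counting the $|\pi(G)|$ classes of nontrivial $p_i$-groups, the trivial class, and the class of $G'=G$, which is not a $p$-group. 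What your route buys is self-containedness and robustness: you do not need the bound $t\geq n_1n_2$ of Lemma \ref{*}(1), which the paper asserts without proof and which is delicate precisely in the relevant case $H\cong K$, where distinct pairs of derived subgroups can yield isomorphic direct products, so the honest elementary bound from a direct factor is only $n\geq m$. What the paper's route buys (when the product lemma is granted) is the much stronger exponential estimate $n\geq(|\pi(G)|+2)^t$, which is exactly what the authors later exploit in the proof of Theorem \ref{tt} to force $t=1$; your argument establishes the stated inequality $n\geq|\pi(G)|+2$ cleanly but would not by itself yield that refinement.
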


\begin{proof}
Let $G$ be a characteristically simple $\mathfrak{D}_n$-group. Then $G\cong \prod _{i=1}^{t} K_i$, where $K_i$'s are isomorphic to a simple $\mathfrak{D}_m$-group $K$. Hence, by Lemma \ref{*} and Lemma \ref{lf}, we have $n\geq m^t\geq (\pi(K)+2)^t = (\pi(G)+2)^t\geq (\pi(G)+2)$, since $\pi(G)=\pi(K)$, as wanted.
\end{proof}

\begin{cor}
$A_5$ is the only nonabelian simple $\mathfrak{D}_5$-group.
\end{cor}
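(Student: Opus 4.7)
The plan is to combine Lemma \ref{lf} with Burnside's classical $p^a q^b$ theorem to pin down $|\pi(G)|$, and then invoke Theorem \ref{tt}. Suppose $G$ is a finite nonabelian simple $\mathfrak{D}_5$-group. By Lemma \ref{lf}, $5 \geq |\pi(G)|+2$, so $|\pi(G)| \leq 3$. On the other hand, Burnside's theorem on the solvability of groups of order $p^a q^b$ implies that every nonabelian finite simple group satisfies $|\pi(G)| \geq 3$. Hence $|\pi(G)| = 3$, and the inequality from Lemma \ref{lf} is actually an equality $n = |\pi(G)|+2$.

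Since a simple group is trivially characteristically simple, Theorem \ref{tt} applies and forces $G \cong A_5$. For the converse, I would inspect the subgroup lattice of $A_5$ directly: its nontrivial subgroups are $C_2$, $C_3$, $V_4$, $C_5$, $S_3$, $D_{10}$, $A_4$, $A_5$, and their derived subgroups fall into exactly the five isomorphism classes $\{1, C_3, V_4, C_5, A_5\}$. Thus $A_5$ is itself a $\mathfrak{D}_5$-group, and the two directions combine to prove the corollary.

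The main obstacle is hidden inside Theorem \ref{tt}: one must identify which characteristically simple groups actually realize the equality $n = |\pi(G)|+2$. Once that theorem is at hand, the corollary is immediate. A direct proof bypassing Theorem \ref{tt} would have to appeal to the classification of nonabelian finite simple groups with exactly three prime divisors---namely $A_5$, $A_6$, $PSL(2,7)$, $PSL(2,8)$, $PSL(2,17)$, $PSL(3,3)$, $PSU(3,3)$, $PSU(4,2)$---and exhibit, for each group other than $A_5$, at least six pairwise non-isomorphic derived subgroups in its subgroup lattice. That case-by-case verification is the real cost of any approach that does not rely on Theorem \ref{tt}.
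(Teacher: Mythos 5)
Your argument is correct, but it follows a different route from the paper. The paper proves this corollary directly from Theorem \ref{t2}: it deduces $|\pi(G)|=3$ (the step from $|\pi(G)|\le 3$ to equality tacitly uses Burnside's $p^aq^b$ theorem, which you rightly make explicit), then invokes Herzog's classification of the eight simple $K_3$-groups and checks each of them by GAP (together with Lemmas \ref{psl} and \ref{sz}) --- exactly the ``case-by-case verification'' you describe as the cost of a direct proof. You instead close the argument by citing Theorem \ref{tt}: from $G\in\mathfrak{D}_5$ and Lemma \ref{lf} plus Burnside you get that the number of isomorphism classes is exactly $5=|\pi(G)|+2$, and the main theorem then forces $G\cong A_5$. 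This is logically sound and not circular, since in the paper the proof of Theorem \ref{tt} runs through Lemma \ref{lff}, Lemmas \ref{psl}, \ref{sz} and the GAP check in Remark \ref{re}, none of which use this corollary; but it does mean your short proof silently leans on the heavier machinery (the minimal simple group classification and the $K_3$/$K_4$ GAP verifications) that the paper only deploys later, whereas the paper's own proof of the corollary needs only Theorem \ref{t2} and the $K_3$ list. A genuine plus of your write-up is the explicit verification that $A_5$ itself realizes exactly the five classes $1$, $C_3$, $V_4$, $C_5$, $A_5$, a point the paper delegates to GAP without comment.
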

\begin{proof}
Let $G$ be a nonabelian simple $\mathfrak{D}_5$-group, by Theorem \ref{t2}, $|\pi(G)|=3$ and it is well-known that the nonabelian simple groups of order divisible by exact three primes are the following eight groups:
$PSL(2,q)$, where $q\in \{5,7,8,9,17\}$, $PSL(3,3)$, $U_3(3)$, $U_4(2)$. Now it is easy to see (by GAP \cite{gap} and also Lemmas \ref{psl}  and \ref{sz}, below) that $A_5$ is the only nonabelian simple $\mathfrak{D}_5$-group.
\end{proof}

Now we can show that the inequality of Theorem \ref{t2}, is proper unless for the group $A_5$. In fact, in the sequel, we want to prove Theorem \ref{tt}. For this purpose we need the following Lemmas.

\begin{lemma}\label{psl}
Let $G=PSL(2,q)$ be a $\mathfrak{D}_n$-group such that $|\pi(G)|\geq 5$. Then $n> |\pi(G)|+2$.
\end{lemma}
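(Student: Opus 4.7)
The plan is to exhibit $|\pi(G)|+3$ pairwise non-isomorphic derived subgroups of $G$, which yields the strict inequality $n\geq|\pi(G)|+3>|\pi(G)|+2$. Three classes come for free: the trivial subgroup $\{1\}$ (as the derived subgroup of any abelian subgroup), the simple nonabelian group $G$ itself (so $G'=G$), and, by Lemma \ref{pi}, one non-trivial $p_i$-group $H_i'$ for each $p_i\in\pi(G)$. These $|\pi(G)|$ nilpotent derived subgroups are pairwise non-isomorphic (different primes) and distinct from both $\{1\}$ and $G$, accounting for $|\pi(G)|+2$ classes already.

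For the extra class I would invoke the dihedral maximal subgroups of $PSL(2,q)$, namely the normalizers of the maximal split and non-split tori. By Dickson's classification these have orders $q\pm 1$ when $q$ is odd, and $2(q\pm 1)$ when $q$ is even. Since $D_{2m}'=\langle r^2\rangle\cong C_{m/\gcd(2,m)}$, the derived subgroups of these dihedral subgroups are two cyclic groups $C_{N_-}, C_{N_+}$, where $N_\pm=(q\pm 1)/d_\pm$ with $d_\pm\in\{1,2,4\}$ an explicit power of $2$ depending on the parity of $q$ and, when $q$ is odd, on $q\bmod 4$. Crucially, the normalization by $d_\pm$ only strips factors of $2$, so in every case one has the uniform containment $\pi(N_\pm)\supseteq\pi(q\pm 1)\setminus\{2\}$.

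The key step is to verify that under $|\pi(G)|\geq 5$ at least one of $N_-,N_+$ has at least two distinct prime divisors. I would argue by contradiction: if both $N_\pm$ were prime powers, then $|\pi(q\pm 1)\setminus\{2\}|\leq 1$ in each sign. For $q$ odd, using $\pi(q-1)\cap\pi(q+1)=\{2\}$ and the fact that the characteristic prime $p$ does not divide $q^2-1$, one computes $|\pi(G)|=|\pi(q-1)|+|\pi(q+1)|\leq 4$; for $q$ even, $N_\pm=q\pm 1$ are odd and coprime, yielding $|\pi(G)|=1+|\pi(q-1)|+|\pi(q+1)|\leq 3$. Either bound contradicts $|\pi(G)|\geq 5$. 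The resulting cyclic derived subgroup is a non-prime-power abelian group, so it cannot be isomorphic to any of the $p_i$-groups from Lemma \ref{pi}, to $G$ (not abelian) or to $\{1\}$ (non-trivial), producing the needed extra class.

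The main obstacle is bookkeeping of the $2$-adic factor $d_\pm$ across the three sub-cases ($q$ even; $q\equiv 1\pmod 4$; $q\equiv 3\pmod 4$), making sure that the $2$-parts removed from $q\pm 1$ never erase the second prime divisor when one is present; the uniform containment $\pi(N_\pm)\supseteq\pi(q\pm 1)\setminus\{2\}$ is precisely what lets the counting argument go through without a case-by-case examination.
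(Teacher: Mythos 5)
Your proposal is correct and follows essentially the same route as the paper: the paper also counts the $|\pi(G)|+2$ classes coming from Lemma \ref{pi}, the trivial subgroup and $G$ itself, and then uses Dickson's theorem to produce a dihedral subgroup whose (cyclic) derived subgroup is divisible by two distinct primes, the existence of such a subgroup being forced by $|\pi(G)|\geq 5$ exactly as in your pigeonhole/contradiction count. Your write-up is simply a more careful version of the paper's argument, making explicit the $2$-adic factors $d_\pm$ and the containment $\pi(N_\pm)\supseteq\pi(q\pm 1)\setminus\{2\}$ that the paper leaves implicit.
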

\begin{proof}
By Lemma \ref{pi}, it is enough to find a proper subgroup of $G$, which its derived subgroup is not a $p$-group. Suppose that $\{p,r,s,t,u\}\subseteq \pi(G)$, then since $|G|=\frac{q(q^2-1)}{d}$, where $d=(2,q-1)$, so $\{r,s,t,u\}\subseteq \pi(q-1)\cup \pi(q+1)$. Thus one of the numbers $q-1$ or $q+1$ is of the form $2m$ where $m$ is a number which is divided by at least two distinct odd prime numbers. Now by Dickson's Theorem \cite{dickson}, $G$ has Dihedral subgroups of the form $D_{2z}$ where $z\mid \frac{q\pm 1}{d}$. The derived subgroup of $D_{2z}$ is divided by at least two distinct primes, as desired. 
\end{proof}
\begin{lemma}\label{4}
Let $G=K\rtimes H$ be a Frobenius group, then $G'=KH'$.
\end{lemma}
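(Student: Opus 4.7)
The plan is to prove $G' = KH'$ by establishing both inclusions, using only the defining property of a Frobenius group: the complement $H$ acts by conjugation on the kernel $K$ without nontrivial fixed points, i.e., no element of $H \setminus \{1\}$ centralizes any element of $K \setminus \{1\}$.

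For the inclusion $G' \subseteq KH'$, I would pass to the quotient. Since $K \trianglelefteq G$, $G = KH$, and $K \cap H = 1$, the projection $\pi \colon G \to G/K$ restricts to an isomorphism on $H$, identifying $G/K$ with $H$. The image $\pi(G')$ is the derived subgroup of $\pi(G)$, namely $H'$, so writing $g \in G'$ uniquely as $g = kh$ with $k \in K$ and $h \in H$ forces $\pi(g) = h \in H'$, giving $g \in KH'$.

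For the reverse inclusion, $H' \subseteq G'$ is automatic, so the real content is showing $K \subseteq G'$. Fix any $h \in H \setminus \{1\}$ and consider the map $\varphi_h \colon K \to K$ defined by $\varphi_h(k) = [k,h]$, which lands in $K$ by normality. If $\varphi_h(k_1) = \varphi_h(k_2)$, a brief rearrangement of $k_1^{-1} h^{-1} k_1 h = k_2^{-1} h^{-1} k_2 h$ shows that $h$ centralizes $k_2 k_1^{-1}$, and fixed-point-freeness forces $k_1 = k_2$. Hence $\varphi_h$ is injective, and finiteness of $K$ promotes this to surjectivity, exhibiting every element of $K$ as a commutator $[k,h] \in G'$.

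The main obstacle, such as it is, is this surjectivity step, and it relies crucially on the fixed-point-free action rather than on any finer structure of Frobenius groups. Once $K \subseteq G'$ and $H' \subseteq G'$ are in place, the product $KH'$ is a subgroup (using $K \trianglelefteq G$) and the two inclusions combine to give $G' = KH'$.
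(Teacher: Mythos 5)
Your proof is correct. The paper dismisses this lemma with ``Obviously,'' so there is no argument to compare against; what you supply is the standard one: the inclusion $G'\subseteq KH'$ holds in any semidirect product by passing to $G/K\cong H$, while the reverse inclusion reduces to $K\subseteq G'$, which you obtain from the injectivity (hence, by finiteness, surjectivity) of $k\mapsto[k,h]$ for a fixed $h\in H\setminus\{1\}$, using exactly the fixed-point-free condition $C_K(h)=1$ that characterizes the Frobenius situation. Both steps are sound, and the finiteness you invoke is available since the paper restricts to finite groups throughout.
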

\begin{proof}
Obviously.
\end{proof}

\begin{lemma}\label{sz}
Let $G=Sz(q)$, $q=2^{2m+1}$. Then $n> |\pi(G)|+2$.
\end{lemma}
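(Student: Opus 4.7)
The plan is to refine the bound $n\geq |\pi(G)|+2$ coming from Lemmas \ref{pi} and \ref{lf} by producing one additional isomorphism class of derived subgroup. Recall that the $|\pi(G)|+2$ classes guaranteed so far are: the trivial subgroup (as the derived subgroup of any abelian subgroup), one non-trivial $p_i$-group for each $p_i\in\pi(G)$ (from Lemma \ref{pi}), and $G$ itself (since $G'=G$). To upgrade to the strict inequality $n>|\pi(G)|+2$, it suffices to exhibit two non-isomorphic non-trivial $2$-subgroups of $G$, each arising as the derived subgroup of some subgroup of $G$; this adds one more class on top of the single $2$-group class already counted.

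The structural input I would use about $G=Sz(q)$, $q=2^{2m+1}$ with $m\geq 1$, is classical (see, e.g., Huppert--Blackburn): the Sylow $2$-subgroup $P$ has order $q^2$ and is a nonabelian Suzuki $2$-group whose commutator subgroup $P'$ coincides with $Z(P)$ and has order $q$; moreover $G$ contains a Borel subgroup $B=P\rtimes C$, where $C$ is cyclic of order $q-1$ acting fixed-point-freely on $P$, so that $B$ is a Frobenius group. Taking $B$ as a first subgroup, Lemma \ref{4} yields $B'=P\cdot C'=P$, so the entire Sylow $2$-subgroup $P$ occurs as a derived subgroup. Taking $P$ itself as a second subgroup, its derived subgroup is $P'$. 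Since $|P|=q^2\neq q=|P'|$ (equivalently, $P$ is nonabelian while $P'$ is abelian), the groups $P$ and $P'$ are non-isomorphic, so together they contribute two distinct isomorphism classes of $2$-subgroups. Combining these with the $|\pi(G)|-1$ classes of non-trivial $p_i$-groups for odd $p_i\in\pi(G)$ from Lemma \ref{pi}, the trivial class, and the class of $G$ itself, we obtain at least
\[
1+(|\pi(G)|-1)+2+1=|\pi(G)|+3
\]
isomorphism classes of derived subgroups, whence $n>|\pi(G)|+2$.

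The main obstacle is not the counting but the correct invocation of the structural facts about $Sz(q)$: one must cite (or briefly verify) that the Sylow $2$-subgroup is nonabelian with $P'\neq P$, and that the Borel subgroup $B=P\rtimes C$ is a genuine Frobenius group with abelian complement, since without both facts Lemma \ref{4} does not deliver $P=B'$ and the extra class disappears. Both are standard, but a careful write-up should reference them explicitly.
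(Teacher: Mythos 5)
Your proof is correct and follows essentially the same route as the paper: both realize the Sylow $2$-subgroup $P$ as the derived subgroup of the Frobenius Borel subgroup $P\rtimes C$ via Lemma \ref{4}, and then exhibit a second, smaller non-trivial $2$-group among derived subgroups to push the count past $|\pi(G)|+2$. The only (harmless) difference is that you take $P'$, the derived subgroup of $P$ itself, for the second isomorphism class, whereas the paper obtains $Z(P)$ as the derived subgroup of the Frobenius group $Z(P)C$; your variant is marginally more economical, since it needs only that $P$ is a non-abelian $2$-group, hence $1<P'<P$.
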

\begin{proof}
Suppose that $F$ is a Sylow $2$-subgroup of $G$, then $F$ is nonabelian of order $q^2$ and $N_G(F)=FH=T$ is a Frobenius group with cyclic complement $H$ of order $q-1$ and kernel $F$. Now since $F$ is nonabelian, so $1< Z(F)< F$, on the other hand, $H\leq N_T(Z(F))$, so $S=Z(F)H$ is a Frobenius group and by Lemma \ref{4}, $|S'|=|Z(F)|=q$ and $|T'|=|F|=q^2$. So $G$ has at least two non-isomorphic $2$-subgroups. Hence $n> |\pi(G)|+2$.
\end{proof}
\begin{remark}\label{re}
If $G$ is a nonabelian simple group and $|\pi(G)|\in \{3,4\}$, then we say that $G$ is a $K_n$-group for $n=3,4$. Herzog in \cite{herzog} proved that there are eight simple $K_3$-groups. Also Shi in \cite{shi} gave a characterization of all simple $K_4$-groups. By GAP software we can see that in these groups $n> |\pi(G)|+2$, unless for the group $A_5$. In the following theorem, we show that in fact $G=A_5$ is the only group among all simple groups whose the number of non-isomorphic derived subgroups is equal to $|\pi(G)|+2$.
\end{remark}
\begin{lemma}\label{lff}
Let $G$ be a nonabelian simple $\mathfrak{D}_n$-group. Then $n=|\pi(G)|+2$ if and only if $G\cong A_5$.
\end{lemma}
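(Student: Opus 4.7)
The plan is to prove both directions of the biconditional. The ``if'' direction is a direct enumeration in $A_5$: its proper subgroups are (up to isomorphism) $\{1\}, C_2, C_3, V_4, C_5, S_3, D_{10}, A_4$, whose derived subgroups realise exactly the four classes $\{1\}, C_3, V_4, C_5$, and together with $A_5 = A_5'$ we obtain $5 = |\pi(A_5)| + 2$ isomorphism classes.

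For the ``only if'' direction, Theorem~\ref{t2} yields $n \geq |\pi(G)| + 2$, and equality forces a rigid structure: the $|\pi(G)|$ non-isomorphic $p$-group derived subgroups supplied by Lemma~\ref{pi}, together with the trivial class and the class of $G$ itself, already exhaust the $|\pi(G)| + 2$ available isomorphism types. So equality means that no proper subgroup of $G$ has a derived subgroup that is simultaneously non-trivial, not a $p$-group for any single prime $p$, and not isomorphic to $G$. The strategy is then to rule out every simple group other than $A_5$ using the classification of finite simple groups, split by $|\pi(G)|$, which is at least $3$ by Burnside's $p^a q^b$ theorem. For $|\pi(G)| = 3$, Herzog's list of the eight simple $K_3$-groups is checked directly (as in Remark~\ref{re}, using GAP), and only $A_5$ survives. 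For $|\pi(G)| = 4$, I would invoke Shi's classification of simple $K_4$-groups and, case by case, exhibit a subgroup whose derived subgroup violates the rigid structure above, typically a dihedral $D_{2mn}$ with $m,n$ distinct odd primes so that $D_{2mn}' \cong C_{mn}$ is an abelian non-prime-power group.

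For $|\pi(G)| \geq 5$, Lemmas~\ref{psl} and~\ref{sz} already eliminate $PSL(2,q)$ and $Sz(q)$. The remaining families (higher-rank classical, exceptional, and sporadic simple groups) would be handled by producing a proper subgroup whose derived subgroup is neither trivial, a $p$-group, nor isomorphic to $G$, for instance via an embedded nonabelian simple subgroup (contributing a perfect proper derived subgroup) or a dihedral subgroup of order $2mn$ with two distinct odd prime divisors. The main obstacle is carrying out this step uniformly across all CFSG-families without a lengthy group-by-group enumeration; the cleanest route would be a single structural statement asserting that every finite nonabelian simple group with at least five prime divisors contains either an embedded $A_5$ or a dihedral subgroup whose order has two distinct odd prime divisors, from which $n > |\pi(G)| + 2$ would follow immediately via Lemmas~\ref{4} and~\ref{pi}.
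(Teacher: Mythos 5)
Your ``if'' direction and your counting framework for equality (the $|\pi(G)|$ prime-power classes from Lemma~\ref{pi}, plus the trivial class, plus $G=G'$ itself) match the paper, and your treatment of $|\pi(G)|\in\{3,4\}$ via Herzog, Shi and a GAP check is essentially what the paper does in Remark~\ref{re}. But the ``only if'' direction has a genuine gap exactly where you flag it: for $|\pi(G)|\geq 5$ you only have arguments for $PSL(2,q)$ (Lemma~\ref{psl}) and $Sz(q)$ (Lemma~\ref{sz}), and for every other family --- higher-rank classical, exceptional, sporadic --- you offer no proof, only the hope of ``a single structural statement'' (every such group contains an embedded $A_5$ or a suitable dihedral subgroup) which you do not establish. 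As written, the proposal does not prove the lemma; it reduces it to an unproved assertion about all remaining CFSG families.

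The missing idea, which is how the paper escapes the group-by-group sweep, is the reduction to \emph{minimal simple groups}. By the result of Barry and Ward cited in the paper, every nonabelian finite simple group contains a minimal simple subgroup. So if $G$ is not itself minimal simple, it has a proper subgroup $H$ that is nonabelian simple, hence perfect with $H'=H$ of order divisible by at least three primes; this $H'$ is nontrivial, not a $p$-group, and (being proper) not isomorphic to $G$, so it immediately forces $n>|\pi(G)|+2$ with no case analysis at all. One is then left only with Thompson's short list of minimal simple groups --- $PSL(2,2^p)$, $PSL(2,3^p)$, $PSL(2,p)$ with $5\mid p^2+1$, $Sz(2^p)$, and $PSL(3,3)$ --- and these are precisely covered by Lemma~\ref{psl}, Lemma~\ref{sz}, and the $K_3$/$K_4$ computations of Remark~\ref{re}. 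If you incorporate this reduction, your argument closes; without it, the case $|\pi(G)|\geq 5$ outside $PSL(2,q)$ and $Sz(q)$ remains open.
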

\begin{proof}
Let $G$ be a nonabelian simple $\mathfrak{D}_n$-group, other than $A_5$. By Lemma \ref{1}, it is enough to find $p\in \pi(G)$ and two subgroups $H_1$ and $H_2$ of $G$ such that $H'_1$ and $H'_2$ are non-isomorphic $p$-groups or find a subgroup $H$ whose derived subgroup is not a $p$-group. It follows that $n> |\pi(G)|+2$. It is well-known that every nonabelian simple group contain a minimal simple group  (see \cite{bw}). So if $G$ is not minimal simple group, Let $H< G$ be a proper minimal simple subgroup. Thus $|H'|=|H|$ is not a $p$-group, so $n>|\pi(G)|+2$. Therefore it is enough to consider minimal simple groups which are the following groups:
 \item[(1)]
   $PSL(2,2^p)$ where $p$ is a prime number.

 \item[(2)]
  $PSL(2,3^p)$ where $p$ is an odd prime.

 \item[(3)]
 $PSL(2,p)$ where $p> 3$ and $5\mid p^2+1$.
  \item[(4)]
   $SZ(2^{p})$ where $p$ is an odd prime.

  \item[(5)]
   $PSL(3,3)$

   Now by Lemmas \ref{psl}, \ref{sz} and Remark \ref{re},  the proof is complete.
\end{proof}

Now we ready to prove the main result.\\

\textbf{Proof of Theorem 1.1.}
 Let $G$ be a characteristically simple $\mathfrak{D}_n$-group. Then $G\cong \prod _{i=1}^{i=t} K_i$, where $K_i$'s are isomorphic to a simple $\mathfrak{D}_m$-group $K$. Now, by Lemma \ref{lff}, we get $n\geq m^t\geq (\pi(K)+2)^t\geq (\pi(G)+2)^t$, since $\pi(G)=\pi(K)$. Therefore $t=1$ and the result follows.

\begin{ex}
Consider the non-solvable symmetric group $G=S_n$, for $n\geq 5$. Since for every $m\leq n$, $S_m\leq S_n$, so $\mathcal{D} =\{A_n, A_{n-1}, \cdots, A_4, V_4, 1\}$ is a set of non-isomorphic derived subgroups of $G$. Now $|\mathcal{D}|= n-1$, so $n\leq t+1$, thus $|\pi(G)|\leq t+1$.
\end{ex}

Note that in general, the relation in Lemma \ref{lf}, is not true for all non-solvable groups. For example see the following.
 \begin{ex}\label{conj}
 Let $H$ be an arbitrary (such as insolvable group) $\mathfrak{D}_n$-group, with $\pi(H)=\{p_1,p_2,\cdots, p_t\}$. If $n\geq t+1$, then consider the group $G=H\times Z_{p_{t+1}}\times Z_{p_{t+2}}\times \cdots \times Z_{p_{n}}$, where $p_1< p_2< \cdots < p_t< p_{t+1} < \cdots <p_{n}$ are prime numbers. By Lemma \ref{*}, $G$ is a $\mathfrak{D}_n$-group with $|\pi(G)|=n$.
 \end{ex}

Note that in general, two groups with the same number of derived subgroups (or even with the same number of isomorphism classes of derived subgroups), need not be isomorphic necessarily. In fact, we a negative answer to a question raised in \cite{zar}.

\begin{prop}\label{ex1}
Let $G=D_{2^n}=\langle r, s \ | \ r^{2^{n-1}}=s^2=1 , r^s=r^{-1} \rangle$, the dihedral group of order $2^n$. Then $G\in \mathfrak{D}_{n-1}\bigcap \mathcal{RG}_{n-1}$.
\end{prop}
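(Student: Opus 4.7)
The plan is to enumerate every subgroup of $G=D_{2^n}$, compute its derived subgroup, and show that the resulting list contains exactly $n-1$ distinct subgroups, all of pairwise distinct orders. Since pairwise distinct orders forces pairwise non-isomorphic, both counts (the number of derived subgroups and the number of isomorphism classes of derived subgroups) equal $n-1$ simultaneously, giving $G\in\mathfrak{D}_{n-1}\cap\mathcal{RG}_{n-1}$ in one stroke.

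First I would invoke the standard classification of subgroups of $D_{2^n}$: every subgroup is either (a) a subgroup $\langle r^{2^i}\rangle$ of the cyclic part $\langle r\rangle$, where $0\le i\le n-1$, or (b) a subgroup of the form $H_{i,j}=\langle r^{2^i},\,r^{j}s\rangle$ where $0\le i\le n-1$ and $0\le j<2^{i}$; in case (b) the subgroup is dihedral of order $2^{n-i}$ (which collapses to $\mathbb{Z}_{2}$ when $i=n-1$).

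Next I would compute derived subgroups. Every subgroup of type (a) is abelian, hence has trivial derived subgroup. For type (b), the identity $[r^{2^i},\,r^{j}s]=r^{-2^{i+1}}$ shows $\langle r^{2^{i+1}}\rangle\subseteq H_{i,j}'$, while observing that $H_{i,j}/\langle r^{2^{i+1}}\rangle$ is generated by two commuting involutions gives the reverse inclusion. Thus $H_{i,j}'=\langle r^{2^{i+1}}\rangle$ whenever $i\le n-3$, and $H_{i,j}'=1$ in the two degenerate ranges $i=n-2$ (where $H_{i,j}\cong V_{4}$) and $i=n-1$ (where $H_{i,j}\cong\mathbb{Z}_{2}$). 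Crucially, $H_{i,j}'$ depends only on $i$, not on $j$, so adjoining different reflections produces no new derived subgroup.

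Collecting these, the complete list of derived subgroups of subgroups of $G$ is
$$\{1\},\ \langle r^{2^{n-2}}\rangle,\ \langle r^{2^{n-3}}\rangle,\ \ldots,\ \langle r^{2}\rangle,$$
a chain of $n-1$ cyclic subgroups of distinct orders $1,2,4,\ldots,2^{n-2}$, hence pairwise non-isomorphic, which yields the claim. The argument contains no deep step; the only real obstacle is administrative, namely keeping the abelian degenerate cases $i\in\{n-2,n-1\}$ straight so as not to overcount or undercount.
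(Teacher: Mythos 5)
Your proof is correct and lands on exactly the same picture as the paper's: the derived subgroups of subgroups of $D_{2^n}$ are precisely the $n-1$ subgroups of the cyclic group $G'=\langle r^2\rangle$, which have pairwise distinct orders $1,2,\dots,2^{n-2}$, so the number of derived subgroups and the number of isomorphism classes coincide at $n-1$. The only difference is one of economy: the paper gets the upper bound immediately from $H'\le G'$ with $G'$ cyclic of order $2^{n-2}$ and then notes each subgroup of $G'$ is realized, whereas you reach the identical list by a full enumeration of subgroups, which costs more bookkeeping but makes the realization step explicit.
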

\begin{proof}
  $G'$ is cyclic of order $2^{n-2}$ and the derived subgroup of every subgroup of $G$ is one of the $n-1$ subgroups $G'$. On the other hand, each of these subgroups of $G'$ is the derived subgroup of some subgroup of $G$.
\end{proof}

 \begin{ex}
 Let $G=D_{2^{6}}$, $S=A_5$ and $H=D_{2^{24}}$, then  $G, S$ are $\mathfrak{D}_5$-groups and $H, S$ are $\mathcal{RG}_{23}$-groups.
 \end{ex}

Finally, in view of the above results, we raised the following conjecture.
\begin{con}
 Let $G$ be a group and $S$ be finite simple group such that $|G|=|S|$.
Is it true that $$G\cong S \Leftrightarrow G, S \in \mathcal{GR}_{n}\setminus \mathcal{GR}_{n-1}, \text{for some~~} n?$$
$(\text {Or},~~G\cong S \Leftrightarrow G, S \in \mathfrak{D}_n\setminus \mathfrak{D}_{n-1}, \text{for some~~} n?)$
\end{con}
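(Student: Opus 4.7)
The forward implication is immediate in both formulations, since isomorphic groups have identical subgroup lattices and hence identical counts of derived subgroups (both as sets and as isomorphism classes). The substance of the conjecture is therefore the converse: fix a finite simple group $S$ and show that any $G$ with $|G|=|S|$ and the same count as $S$ must be isomorphic to $S$. My plan is to split into two subcases according as $G$ is simple or nonsimple.

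When $G$ is simple, the classification of finite simple groups together with Artin's theorem on coincidences of orders reduces the problem to a short list: the pair $(A_8,PSL(3,4))$ of order $20160$ and the pairs $(B_n(q),C_n(q))$ for $n\geq 3$ and $q$ an odd prime power. For $(A_8,PSL(3,4))$ one can compute the $\mathfrak{D}$-count (respectively $\mathcal{GR}$-count) directly in GAP. For $(B_n(q),C_n(q))$ one would compare derived subgroups of parabolic subgroups, whose unipotent radicals differ in structure between the two families, and extract distinct counts that way. When $G$ is nonsimple of order $|S|$, the divisibility properties of $|S|$ together with CFSG constrain the composition factors of $G$ and in most cases force $G$ to be solvable. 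For such $G$ the derived subgroups lie in a short chain controlled by the derived length, and one would bound the $\mathfrak{D}$-count from above by exploiting the nilpotent-by-abelian-by-finite structure furnished by \cite{zar}, comparing this upper bound against the lower bound for $S$ coming from Lemma \ref{pi} and the dihedral/Frobenius subgroup constructions in Lemmas \ref{psl} and \ref{sz}.

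The main obstacle, and the reason this remains a conjecture, is producing the quantitative separation uniformly. Theorem \ref{t2} only yields $n\geq|\pi(G)|+2$, which is far too weak to distinguish $S$ from a solvable group of the same order; indeed Proposition \ref{ex1} together with the example $D_{2^6},A_5\in\mathfrak{D}_5$ shows that small $\mathfrak{D}$-counts do coincide across nonisomorphic group types of different orders, so the saving grace in the conjecture is precisely the hypothesis $|G|=|S|$. Converting this hypothesis into a usable separation, likely by counting non-isomorphic derived subgroups that arise from the maximal subgroups of $S$ via the Aschbacher--O'Nan--Scott classification, and matching it against a sharp upper bound for solvable groups of order $|S|$, is the crux of the problem; small simple groups would presumably need computer verification, while a uniform structural argument would be required for the generic case.
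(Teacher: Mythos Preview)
There is nothing in the paper to compare your proposal against: the statement is explicitly posed as a \emph{conjecture}, and the paper offers no proof, partial proof, or proof sketch for it. The surrounding material (Proposition~\ref{ex1} and the example $D_{2^{6}},A_5\in\mathfrak{D}_5$) is there only to motivate why the extra hypothesis $|G|=|S|$ is needed, not to establish the conjecture itself.

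Your write-up is therefore not a proof but a research plan, and to your credit you say so yourself (``the reason this remains a conjecture''). As a plan it is reasonable in broad outline---the reduction of the simple-vs-simple case to the Artin coincidences $(A_8,PSL(3,4))$ and $(B_n(q),C_n(q))$ is correct, and the idea of separating a solvable $G$ from $S$ by bounding derived-subgroup counts is natural---but several steps are optimistic. In particular, a nonsimple $G$ with $|G|=|S|$ need not be solvable (e.g.\ $|S|=|A_8|=2^6\cdot 3^2\cdot 5\cdot 7$ admits many nonsolvable nonsimple groups with $A_5$ or $PSL(2,7)$ as a composition factor), so the dichotomy ``simple versus solvable'' is not exhaustive. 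And the upper bound you hope to extract from \cite{zar} for solvable groups is not stated there in any quantitative form sharp enough to beat the lower bound $|\pi(S)|+2$ from Theorem~\ref{t2}. These are exactly the gaps that make the statement a conjecture rather than a theorem; your proposal identifies them but does not close them.
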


\section*{acknowledgment}
We would like to thank our supervisor, Dr. Mohammad Zarrin for his helpful suggestions.

\end{document}